\newcommand{\RR}{\ensuremath{\mathbb{R}}}
\newcommand{\bp}{\ensuremath{\mathbf p}}
\newcommand{\bq}{\ensuremath{\mathbf q}}
\newcommand{\barR}{\ensuremath{\overline{R}}}
\newtheorem{thm}{Theorem}
\newtheorem{defn}[thm]{Definition}
\newtheorem{lem}[thm]{Lemma}
\newtheorem{proposition}[thm]{Proposition}
\title{Hyperbanana Graphs}
\author{Christopher Clement\thanks{Department of Mathematics,
        University of Michigan, {\tt crclement@umich.edu}}
		\and Audrey Lee-St.John\thanks{Department of Computer Science, Mount Holyoke College, {\tt astjohn@mtholyoke.edu}. 
		Research partially supported by the Clare Boothe Luce Foundation.}
		\and Jessica Sidman\thanks{Department of Mathematics \& Statistics, Mount Holyoke College, {\tt jsidman@mtholyoke.edu}}
		\thanks{All three authors were partially supported by NSF grant DMS-0849637.}}
\begin{document}
	\setlength\abovedisplayskip{1pt}
		  \setlength\belowdisplayskip{1pt}
		  \setlength\abovedisplayshortskip{1pt}
		  \setlength\belowdisplayshortskip{1pt}
		
\thispagestyle{empty}
\maketitle

\begin{abstract}
	A {\em bar-and-joint} framework is a finite set of points together with specified distances
	between selected pairs.  In rigidity theory we seek to understand when the remaining pairwise
	distances are also fixed.
	If there exists a pair of points which move relative to one another while
	maintaining the given distance constraints, the framework is {\em flexible}; otherwise, it is {\em rigid}. 
	
	Counting conditions due to Maxwell give a necessary combinatorial criterion 
	for generic minimal bar-and-joint rigidity in all dimensions.  Laman showed that these conditions 
	are also sufficient for frameworks in $\RR^2$.
	However, the flexible ``double banana'' shows that Maxwell's conditions are not
	sufficient to guarantee rigidity in $\RR^3.$ 
	We present a generalization of the double banana to a family of {\em hyperbananas}.
	In dimensions 3 and higher, these
	are (infinitesimally) flexible, providing counterexamples to the natural generalization of Laman's theorem.
\end{abstract}

\section{Introduction}
A {\em bar-and-joint framework} is composed of universal {\em joints} whose relative positions are constrained by
fixed-length {\em bars}. An {\em embedding} of such a framework in $\RR^d$ associates a point in $\RR^d$ to each joint   
with the property that the distance between joints connected by a bar is satisfied by the embedding.
Bar-and-joint frameworks can be used to model structures arising in many applications, including sensor networks, 
proteins, and Computer Aided Design (CAD) systems. In combinatorial rigidity theory we seek an understanding of
the structural properties of such a framework, and ask whether it
is  flexible (i.e., admits an internal motion that respects the constraints) or rigid.

  \begin{figure}[t]
\centering
\includegraphics[width=1.25in]{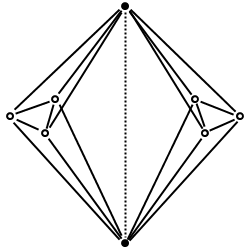}
\caption{The double banana is a Maxwell graph in $\RR^3$, but is flexible. Each ``banana'' can rotate about the
{\em implied hinge} (dotted).}
\label{fig:dblbanana}
\end{figure}

In this paper, we assume that we are given an embedding of a bar-and-joint framework from which
the lengths of bars can be inferred. 
\begin{defn}
A \emph{bar-and-joint framework} $F = (G, \bp)$ 
embedded in $\mathbb{R}^d$ 
is composed of a graph $G = (V, E)$ with $|V| = n$ and $|E|=m$
and an embedding $\bp: V \rightarrow \RR^d$, which assigns a 
position vector $\bp_i$ to each vertex $v_i$.
\end{defn}
We only concern ourselves with \textit{generic embeddings} of these frameworks, which
can be thought of as embeddings with the properties we would expect if we chose an embedding at random.  To formally define
genericity we require the notion of a {\bf rigidity matrix}, which encodes the infinitesimal behavior
of the framework.
\begin{defn} \label{defn:rm}
For a framework $F = (G,\bp)$ embedded in $\mathbb{R}^d$ we define a \emph{rigidity matrix} $M_F$ to be an $m \times dn$ matrix in which
the columns are grouped into $n$ sets of $d$ coordinates for each vertex.
Each row of the rigidity matrix corresponds to an edge $ij$ and has the following pattern.
{\scriptsize
\[
  \begin{blockarray}{cccccccc}
    & v_1 & \dots & v_i & \cdots & v_j & \dots & v_n \\
    \begin{block}{c[ccccccc]}
    \hspace{-2mm}ij & 0 & \cdots 0 \cdots & \bp_i - \bp_j & \cdots 0 \cdots & \bp_j - \bp_i & \cdots 0 \cdots & 0 \\
    \end{block}
  \end{blockarray}
\]
}
\end{defn}

If $F$ is a framework, $M_F$ determines if it is {\em infinitesimally} flexible or rigid; for brevity,
we omit ``infinitesimally'' for the remainder of this paper.
We say that $F$ is \textit{rigid} if the insertion of any new bar between vertices
does not change the rank of $M_F$;
otherwise it is \textit{flexible}.  
A rigid framework is \textit{minimally rigid} if the rows of $M_F$ are independent.

The \textit{infinitesimal motions} of $F$ can be encoded 
by assigning a velocity vector $\bp_i' \in \RR^d$ to each vertex $v_i$ 
so that $(\bp_1', \dots, \bp_n')$ is nonzero and is in the null space of $M_F$
(intuitively, these are instantaneous velocities that
do not shrink or stretch the bar constraints).  
There is always a set of {\em trivial motions} corresponding to rigid body motions of
$\RR^d$; the
space of rigid motions of $\RR^d$ has dimension $\binom{d+1}{2}$ and is 
generated by rotations about $(d-2)$-dimensional affine linear subspaces and translations.
In general, then, a framework on at least $d$ vertices is minimally rigid if and only
if its rigidity matrix has nullity $\binom{d+1}{2}$.
However, if a framework $F$ is contained in an affine subspace $H \subset \RR^d$ where $\dim H \leq d-2,$ 
then there is a rigid motion of $\RR^d$ that fixes $F$; hence,
the null space of $M_F$ has dimension less than $\binom{d+1}{2}$.

Combinatorial counting conditions, first observed by Maxwell \cite{maxwell1864}, give a necessary condition 
for minimal bar-and-joint rigidity. 
Throughout this paper, we will use the convention that,
if $V'$ is a subset of the vertices of a graph $G$ and $\mathcal{E}$ is a subset of the edges of $G,$ 
then $\mathcal{E}(V')$ is the set of edges in $\mathcal{E}$ induced by 
the vertices in $V'.$  

\begin{defn}
	A {\em Maxwell graph} $G=(V,E)$ in dimension $d$ satisfies
\begin{enumerate}
\item \label{maxwell:total} $|E| = d|V| - \binom{d+1}{2}$
\item \label{maxwell:sub} $|E(V')| \leq d|V'| - \binom{d+1}{2},$ for all $V' \subseteq V$ where $|V'| \geq d$.
\end{enumerate}
\label{maxwell}
\end{defn}

For almost all frameworks $F=(G, \bp)$ on a fixed graph $G$, the rank of $M_F$ is constant, as 
the set of special embeddings for which $M_F$ drops rank is parameterized by a closed subset of $\RR^{dn}.$
We formally define genericity as follows.

\begin{defn}
A framework $(G,\bp)$ is {\em generic} if its rigidity matrix achieves the maximum rank over
all frameworks $(G,\bq)$. 
\end{defn}
We call a framework {\em generically minimally rigid} if there exists a generic framework with the
same underlying graph that is minimally rigid.  
We analyze the generic behavior of a framework purely by the combinatorial structure of the graph.
Therefore, from here on we will write $M_G$ to denote the rigidity matrix associated to a generic embedding
 of $G.$

In $\RR^2$, Laman proved that the Maxwell conditions are sufficient for generic minimal rigidity.
\begin{thm}[Laman\cite{laman:rigidity:1970}]
A bar-and-joint framework, with underlying graph $G = (V, E)$, 
embedded in $\mathbb{R}^2$ is generically minimally rigid if and only if it satisfies the following conditions:
\begin{enumerate}
\item $|E| = 2|V| - 3$
\item $|E(V')| \leq 2|V'| - 3,$ for all $V' \subseteq V$ where $|V'| \geq 2$
\end{enumerate}
\label{laman}
\end{thm}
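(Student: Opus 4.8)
The plan is to prove the two implications separately. The forward direction---generic minimal rigidity implies the two counting conditions---is the Maxwell half and follows directly from the rank structure of $M_G$. First I would note that a framework on $n \geq 2$ vertices in $\RR^2$ is minimally rigid exactly when all $m$ rows of $M_G$ are independent and the nullity equals $\binom{3}{2} = 3$, so the row rank is $2n - 3$; independence of every row then forces $|E| = 2|V| - 3$, which is condition~(1). For any $V' \subseteq V$ with $|V'| \geq 2$, the rows indexed by $E(V')$ form a submatrix whose nonzero entries lie in the $2|V'|$ columns of the vertices in $V'$. Since the trivial rigid motions of $\RR^2$ always lie in the null space of this induced matrix, its rank is at most $2|V'| - 3$; as the $|E(V')|$ rows are a subset of an independent set they are independent, giving $|E(V')| \leq 2|V'| - 3$, which is condition~(2). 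This establishes necessity.

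For sufficiency I would use the Henneberg construction, combining a purely combinatorial decomposition with a geometric invariance statement. On the combinatorial side, I would prove by induction on $|V|$ that every graph satisfying both Laman conditions is obtainable from a single edge $K_2$ by a sequence of two moves: a \emph{Type I} move adds a new vertex joined to two existing vertices, and a \emph{Type II} move deletes an edge $\{a,b\}$ and adds a new vertex $v$ joined to $a$, $b$, and some third vertex $c$. The engine of the induction is the lemma that a Laman graph on $n \geq 3$ vertices always has a vertex of degree $2$ or $3$: the total count $2n-3$ pushes the average degree below $4$, while condition~(2) forbids vertices of degree $0$ or $1$. A degree-$2$ vertex is removed by reversing a Type I move; a degree-$3$ vertex requires showing that among its three neighbors there is a nonadjacent pair to which the deleted edge can be reattached so that the resulting smaller graph still satisfies both Laman conditions.

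On the geometric side, I would show that each move preserves generic minimal rigidity. Because genericity is defined as attaining the maximum rank of $M_G$, it suffices at each step to exhibit a single embedding of the enlarged graph whose rigidity matrix has the full expected rank. For Type I this is routine: a new vertex placed generically and joined by two bars to existing generic vertices contributes two rows whose leading blocks $\bp_v - \bp_a$ and $\bp_v - \bp_b$ are independent and occupy the two fresh columns, so rank and the vertex--edge counts increase consistently. The delicate case is Type II, where I would place the new vertex $v$ collinear with the endpoints $a$ and $b$ of the deleted edge. Collinearity makes the two bars $va$ and $vb$ act, infinitesimally, like the single deleted bar $ab$ together with one perpendicular degree of freedom for $v$; the third bar $vc$, with $c$ off the line, removes exactly that freedom. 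A rank count then shows the specialized framework on the enlarged graph has the rank predicted for minimal rigidity, and the genericity principle lifts this to the generic framework.

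I expect the main obstacle to be the Type II step in both of its guises. Combinatorially, guaranteeing that a degree-$3$ vertex can be removed while preserving the subset inequalities~(2) is the subtle point: one must argue that the three neighbor-pairs cannot all simultaneously lie in saturated induced subsets, so at least one reattachment of the deleted edge keeps every induced count within bound. Geometrically, the collinear specialization must be justified with care---verifying that the perpendicular motion is the \emph{only} nontrivial artifact introduced and that the bar to $c$ annihilates it---after which the passage from this single special embedding back to the generic one relies on the semicontinuity of rank that underlies the definition of genericity.
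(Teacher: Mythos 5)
You cannot really be compared against the paper here, because the paper contains no proof of this statement: Theorem~\ref{laman} is quoted as Laman's theorem, with the citation to Laman's 1970 article standing in for the argument. Judged on its own merits, your proposal follows the classical route (essentially Laman's own, in its modern Henneberg-construction form as treated by Tay and Whiteley). The necessity half is correct as you state it: independence of the rows forces $|E| = 2|V| - 3$, and for $|V'| \geq 2$ the restriction of the three-dimensional space of trivial planar motions to the points of $V'$ lies in the kernel of the induced submatrix, capping its rank at $2|V'| - 3$. The sufficiency half is correctly structured, and you have put your finger on exactly the two places where the real work lies: (i) the combinatorial lemma that a degree-$3$ vertex $v$ with neighbors $a,b,c$ can be removed and one nonadjacent neighbor pair rejoined so that both counts survive --- the standard argument first rules out all three pairs being edges (else $\{v,a,b,c\}$ would induce $6 > 2 \cdot 4 - 3$ edges, violating condition~(2)) and then uses a union/intersection (submodularity) argument on saturated subsets to show that not every candidate pair can be blocked; and (ii) the rank computation for the edge-split move at the collinear specialization, followed by lower semicontinuity of rank to pass back to a generic embedding. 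Neither (i) nor (ii) is actually carried out in your text, so as written this is a correct and well-aimed outline rather than a complete proof; filling in (i) is where most of the technical effort in Laman's original argument goes.
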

However, the sufficiency of the Maxwell counting conditions for rigidity
does not generalize to higher dimensions.  In $\RR^3$, the
well-known ``double banana'' is a Maxwell
graph that is flexible \cite{structrigidityDblBanana}. This structure is composed of 
two ``bananas'' joined on a pair of vertices (refer to Figure \ref{fig:dblbanana}) and exhibits a hinge
motion
about the dotted line.
This denotes the existence of an \textit{implied edge} between two vertices that are not incident to each other,
yet whose distance is fixed as a consequence of the other constraints. Since a rotation is allowed about the edge, it is 
called an \textit{implied hinge}.

Counterexamples like the double banana can provide insight into the challenges
presented in dimension 3 and higher for which no combinatorial characterization of bar-and-joint rigidity is known. 

\medskip\noindent{\bf Contributions.}
In this paper, we describe a class of graphs called {\em hyperbananas}
that generalize the double banana to higher dimensions.  We present hyperbananas that are
Maxwell graphs and show these to be (infinitesimally) flexible.
To the best of our knowledge, this is the first family of counterexamples to the sufficiency of the
Maxwell conditions for minimal bar-and-joint rigidity 
addressing
all dimensions of 3 and higher.

\medskip\noindent{\bf Related work.}
Other generalizations of the double banana include
the banana spider graphs of Mantler and Snoeyink  \cite{mantlerSnoeyinkCCCG}.
These were developed to address 
an attempt at classifying
3D bar-and-joint rigidity by vertex connectivity, 
as it was conjectured that all graphs with implied hinges must be 2-connected (like the double banana). 
The banana spider graphs provide examples with higher vertex connectivity, answering this conjecture in the negative.
The key idea was to add ``spider" components to the double banana,
 increasing vertex connectivity while maintaining flexibility about the implied hinge.

Another class of counterexamples to Maxwell's conditions in 3D was developed by Cheng et al. \cite{chengSitharamStreinuCCCG}.
These ``ring of roofs'' frameworks, first described by Tay \cite{ringOfRoofs}, 
provide examples of flexible Maxwell graphs that admit
no non-trivial rigid subgraphs, i.e., rigid subgraphs larger than a tetrahedron.  
This countered an earlier attempt by Sitharam and Zhou \cite{sitharamZhou04}
to characterize 3D bar-and-joint rigidity by detecting rigid components and adding the resulting implied edges.

\section{Maxwell hyperbananas}
We now present a family of graphs called hyperbanana graphs; under certain
conditions, hyperbananas are Maxwell graphs. We generalize 
the double banana, which consists of two minimally rigid ``bananas'' glued
together on a pair of vertices. Each banana can be built using the following inductive 
construction.
\begin{defn}
Fix a positive integer $d.$   A \emph{d-Henneberg 0-extension} on a 
graph $G$ 
results in a new graph by adding a single vertex and connecting it to $d$ 
distinct vertices in $G$. 
\end{defn}
When a $d$-Henneberg 0-extension is applied to a minimally rigid framework in $\RR^d$,
 minimal rigidity is preserved, and hence so are the
Maxwell conditions \cite{taywhiteley}. In the double banana, each individual banana is created by two 3-Henneberg 0-extensions on a triangle
(which is minimally rigid in $\RR^3$), connecting each new vertex
to the 3 vertices of the triangle.

Before generalizing the banana construction, we give some additional notation.  If $U$ and $W$ are finite sets,
let $K_U$ denote the complete graph with vertex set $U$ and $K_{U,W}$ be the complete bipartite graph on the 
two disjoint sets $U$ and $W$.
\begin{defn}
A {\em banana bunch} is a graph $B_{d,b}$ 
obtained by performing $b$ $d$-Henneberg 0-extensions on a $K_d.$
The $b$ vertices added by the Henneberg extensions are called
{\em banana vertices}.
\end{defn}
Since $K_d$ embedded in $\RR^d$ is minimally rigid for any $d$, 
$B_{d,b}$ is generically minimally rigid in dimension $d$.

Hyperbananas are composed of two banana bunches glued together along the banana vertices.  

\begin{defn}
For $i = 1,2,$ let $G_i$ be a copy of $B_{d,b}$ with vertex set partitioned into $V_i \cup U_i$, where the $K_d$ has vertex set $V_i$ and the set $U_i$ consists of banana vertices.  We define
the \emph{hyperbanana} $H_{d,b}$ to be $G_1 \cup G_2/\sim$, where $\sim$ identifies banana vertices based on some fixed bijection from $U_1$ to $U_2.$  The 
vertex set of $H_{d,b}$ is the set $V = V_1\cup V_2 \cup U,$ where $U$ is the set of banana vertices.
\end{defn}

The double banana is simply $H_{3,2}$.
An example of a higher dimensional hyperbanana, $H_{5,3}$, is pictured in Figure \ref{fig:kb53}.
While this is a Maxwell graph, not all choices of $b$ and $d$ satisfy the counting conditions.
\begin{figure}[t]
\subfigure[]{	\begin{minipage}[b]{0.48\linewidth}
	\centering\includegraphics[scale=.5]{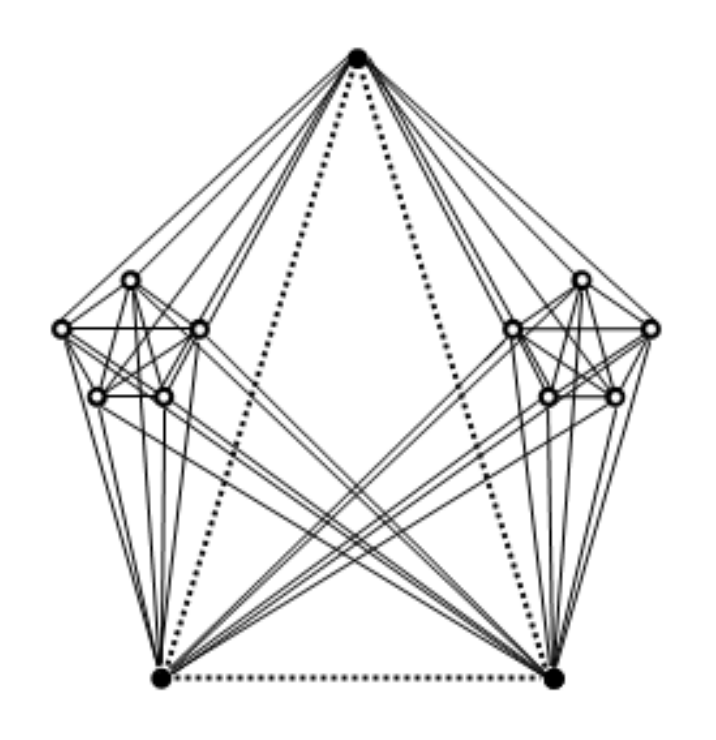}\end{minipage}\label{fig:h53}}
	\hfil
   \subfigure[]{	\begin{minipage}[b]{0.48\linewidth}
	\centering\includegraphics[scale=.5]{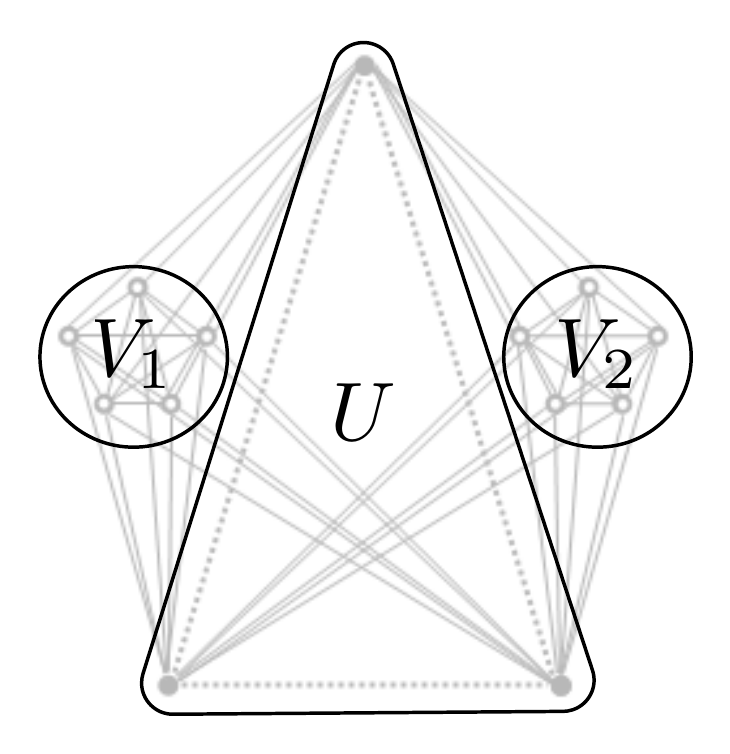}\end{minipage}\label{fig:h53-labeled}}
\caption{The hyperbanana $H_{5,3}$ is a flexible Maxwell graph $\mathbb{R}^5$; there are 3
implied edges (dotted) among the vertices in $U$.}
\label{fig:kb53}
\end{figure}
For example, simply checking the counts on the total number of edges for the hyperbanana $H_{4,3}$ confirms 
that this graph has too many edges to be Maxwell. 
In fact, it is rigid in $\mathbb{R}^4$, but {\em overconstrained}.  Therefore, it is not minimally rigid as its
rigidity matrix contains dependencies.  
Checking the counts on the total number of edges for the hyperbanana $H_{6,3}$ shows that it is {\em underconstrained} and
therefore flexible in $\mathbb{R}^6$.  

\subsection{Odd-dimensional hyperbananas}
When $d$ is odd and equal to $2b-1$, we obtain hyperbananas that are Maxwell graphs.
We begin with a more general lemma that will be used in proving the counting conditions.
In the proofs that follow, 
we define
$V_i'  = V' \cap V_i$ and $U' = V' \cap U$
for a subset $V'$ of the vertex set of $H_{d,b}$, 

\begin{lem}
	\label{lem:genIneq}
If $H_{d,b}=(V,E)$ 
and $V' \subseteq V,$ and $|V_i' \cup U'| \geq d$ for $i=1,2,$ then \[
|E(V')| \leq d|V'| - 2\tbinom{d+1}{2} + d|U'|.
\]
\end{lem}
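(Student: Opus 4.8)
The plan is to bound the induced edges by distributing them over the two banana bunches and then invoking the Maxwell sub-count on each bunch separately. The starting observation is structural: the vertex classes $V_1$, $V_2$, and $U$ are pairwise disjoint, and $H_{d,b}$ carries no edge joining $V_1$ to $V_2$, since every edge descends from one of the two copies $G_i \cong B_{d,b}$. Consequently I would write the induced edge set $E(V')$ as the union of $E_{G_1}(V_1' \cup U')$ and $E_{G_2}(V_2' \cup U')$, whose only possible overlap consists of edges lying among the banana vertices $U'$. This yields
\[
|E(V')| \le |E_{G_1}(V_1' \cup U')| + |E_{G_2}(V_2' \cup U')|,
\]
where the inequality (rather than equality) accounts for any edge within $U'$ that is shared by both bunches.

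Next I would use that each $G_i$ is a banana bunch $B_{d,b}$, which the previous paragraph establishes is generically minimally rigid in $\RR^d$. Since minimal rigidity forces the Maxwell conditions, every subset $W$ of the vertices of $G_i$ with $|W| \ge d$ obeys $|E_{G_i}(W)| \le d|W| - \binom{d+1}{2}$. The hypothesis $|V_i' \cup U'| \ge d$ is exactly the condition needed to apply this with $W = V_i' \cup U'$ for $i = 1, 2$; because $V_i'$ and $U'$ are disjoint, this reads $|E_{G_i}(V_i' \cup U')| \le d(|V_i'| + |U'|) - \binom{d+1}{2}$.

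Finally I would add the two bounds and simplify. Summing produces $d(|V_1'| + |V_2'| + 2|U'|) - 2\binom{d+1}{2}$, and since $|V'| = |V_1'| + |V_2'| + |U'|$ the bracketed quantity rearranges as $|V'| + |U'|$, giving the claimed bound $d|V'| - 2\binom{d+1}{2} + d|U'|$.

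The step demanding the most care is the initial decomposition: the whole argument hinges on the facts that the two bunches share no vertices outside $U$ and no edges outside those incident to $U$, so that the only double-counting is confined to $E(U')$ and the union bound is valid. Once that is pinned down, the remainder rests entirely on the already-established minimal rigidity of $B_{d,b}$ and on routine arithmetic.
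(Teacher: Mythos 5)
Your proposal is correct and follows essentially the same route as the paper: apply the Maxwell sub-count $|E(V_i'\cup U')| \leq d|V_i'\cup U'| - \tbinom{d+1}{2}$ to each banana bunch (justified by minimal rigidity and the hypothesis $|V_i'\cup U'|\geq d$), sum the two bounds, and rearrange using $|V'| = |V_1'|+|V_2'|+|U'|$. Your explicit justification of the decomposition step (no edges join $V_1$ to $V_2$, so every induced edge lives in one of the two bunches) is a detail the paper leaves implicit, but it is the same argument.
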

\begin{proof}
As each banana bunch is minimally rigid we have
\begin{equation}
		\label{eq:case1}
		|E(V_i' \cup U')| \leq d|V_i' \cup U'| - \tbinom{d+1}{2}
		\end{equation} 
	for each $i.$
	Adding the inequalities yields
	\begin{equation}\label{eq:case2}
	\begin{split}
	|E(V')| \leq d(|V_1'| + |V_2'| + 2|U'|) - 2\tbinom{d+1}{2}\\
	= d(|V_1'| + |V_2'| + |U'|) - 2\tbinom{d+1}{2} +d|U'|\\
	 =d|V'| - 2\tbinom{d+1}{2} +d|U'|.\\
	 	\end{split}
	\end{equation}
\end{proof}

We can now show that the specific class of hyperbananas in odd-dimensional spaces are Maxwell graphs.
\begin{thm}
	\label{thm:oddHyperbananas}
The hyperbanana $H_{d,b}$ embedded in $\mathbb{R}^{d}$ with $d = 2b-1$ is a Maxwell graph.
\end{thm}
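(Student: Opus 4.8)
The plan is to verify the two Maxwell conditions of Definition~\ref{maxwell} directly, using Lemma~\ref{lem:genIneq} for the ``spread out'' subsets and a separate direct count for subsets concentrated in a single banana bunch.

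First I would dispatch the global count, condition~\ref{maxwell:total}. Each banana bunch $B_{d,b}$ is a $K_d$ together with $b$ banana vertices, each joined to the $d$ vertices of the $K_d$, so it has $\tbinom{d}{2}+bd$ edges on $d+b$ vertices, and the banana vertices span no edges among themselves. Consequently, gluing two copies along $U$ identifies no edges, giving $|V| = 2d+b$ and $|E| = 2\tbinom{d}{2} + 2bd$. A one-line computation then shows that $|E| = d|V| - \tbinom{d+1}{2}$ is equivalent to $d(2b-1-d)=0$, i.e.\ to $d = 2b-1$, which is precisely the hypothesis.

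For the sparsity count, condition~\ref{maxwell:sub}, on a subset $V'$ with $|V'|\geq d$, the key structural point is that the two bunches meet only in the independent set $U$, so every induced edge lies in exactly one bunch and $E(V') = E(V_1'\cup U') \disjointUnion E(V_2'\cup U')$. When $|V_i'\cup U'|\geq d$ for both $i$, Lemma~\ref{lem:genIneq} applies and yields $|E(V')| \leq d|V'| - 2\tbinom{d+1}{2} + d|U'|$. Since $|U'|\leq |U| = b$ and the relation $d = 2b-1$ forces $db = \tbinom{d+1}{2}$, the surplus term satisfies $d|U'|\leq \tbinom{d+1}{2}$, which gives condition~\ref{maxwell:sub}. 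This is exactly where the odd-dimension relation is essential.

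The remaining case is when $V'$ concentrates in one bunch, say $|V_2'\cup U'| < d$, where minimal rigidity no longer controls the $G_2$-contribution. Writing $a_i = |V_i'|$ and $u = |U'|$, I would instead count that contribution exactly: the induced subgraph on $V_2'\cup U'$ is a clique on $V_2'$ completely joined to the independent apex set $U'$, hence has $\tbinom{a_2}{2} + a_2 u$ edges. If $|V_1'\cup U'|\geq d$, combining this with the minimal-rigidity bound on the $G_1$-side works because $\tbinom{a_2}{2} + a_2 u \leq d a_2$ whenever $a_2 + u \leq d-1$, which absorbs the extra vertices cleanly. The main obstacle is the sub-case where \emph{both} $|V_i'\cup U'| < d$: neither side is rigid, and one must show directly that
\[
\tbinom{a_1}{2} + \tbinom{a_2}{2} + (a_1+a_2)u \;\leq\; d(a_1+a_2+u) - \tbinom{d+1}{2}
\]
under the constraints $a_i + u \leq d-1$ and $a_1 + a_2 + u \geq d$. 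I expect this to reduce, after verifying that the left-hand side is maximized at the extreme feasible allocation $a_1 = d-1-u,\ a_2 = 1$ with the lower bound on $|V'|$ tight, to a one-variable inequality in $u$ that holds for every $d\geq 3$; this boundary analysis is the step demanding the most care.
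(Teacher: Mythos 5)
Your proposal is correct, and on the global count and the first two subset cases it is essentially the paper's own argument: the count $2\tbinom{d}{2}+2bd = d|V|-\tbinom{d+1}{2}$ exactly when $d=2b-1$; Lemma~\ref{lem:genIneq} plus $d|U'| \leq db = \tbinom{d+1}{2}$ when both $|V_i'\cup U'|\geq d$; and the bound $|E(V_2'\cup U')| \leq d|V_2'|$ added to the rigidity bound on the large side (the paper derives this bound from $|U'|\leq b$ and $|V_2'|\leq d-1$ as in Inequality~\ref{eq:b}, you from $|V_2'|+|U'|\leq d-1$ directly; both work). Where you genuinely diverge is the hardest case, $|V_i'\cup U'|<d$ for both $i$. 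The paper avoids optimization by a combinatorial transfer: it pads $V_1'$ with a set $W\subseteq V_2'$ so that $|V_1'\cup W\cup U'|=d$, compares against a $d$-vertex set $V_1''\cup U'$ lying entirely in bunch~1 (possible since $|V_1|=d$), which yields Inequality~\ref{eq:case3} with correction term $|E(K_{V_1',W})|$, then applies the second-case bound to $W'\cup U'$ and finishes with $|E(K_{W,W'})|\leq|E(K_{V_1',W})|$, valid because $|W'|\leq|V_2'|\leq|V_1'|$. You instead maximize the deficit of the exact induced count $\tbinom{a_1}{2}+\tbinom{a_2}{2}+(a_1+a_2)u$ over the constraint polytope; this route does work, and the extreme point you guessed is the right one, but you deferred the verification, so here is how it closes. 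Increasing $a_1$ by one (while staying feasible, so $a_1+u\leq d-2$) changes the deficit by $a_1+u-d\leq -2<0$, and symmetrically for $a_2$; hence the maximum occurs at minimal $a_1,a_2$, which forces $a_1+a_2+u=d$ (even if $a_1=a_2=1$, the constraint $u\leq d-2$ then forces the sum to equal $d$). For fixed $u$ and fixed sum, convexity of $\tbinom{\cdot}{2}$ pushes the maximum to the split $(a_1,a_2)=(d-1-u,1)$, where the required inequality becomes
\[
\tbinom{d-1-u}{2}+(d-u)u \;\leq\; d\cdot d-\tbinom{d+1}{2}=\tbinom{d}{2},
\]
which simplifies to $u^2-3u+2(d-1)\geq 0$; this holds for every integer $u\geq 0$ and every $d\geq 3$, since $u^2-3u\geq -2$ while $2(d-1)\geq 4$. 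So your approach is sound and arguably more transparent about where the slack is; what the paper's decomposition buys in exchange is reusability, since the same $W$, $W'$ argument is recycled verbatim in the even-dimensional theorem, whereas your monotonicity-and-convexity reduction is the step that must be written out in full for the proof to be complete.
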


\begin{proof}
We check condition \ref{maxwell:total} of Definition \ref{maxwell} by vertex and edge counts.  
The graph $H_{d,b}$ has $d$ vertices from each complete $K_d$ graph
and $b$ banana vertices, totaling $2d + b$ vertices.  Since $d = 2b - 1$, there are $\frac{5d+1}{2}$ vertices.  
Each $K_d$ has $\binom{d}{2}$ edges, and each banana vertex is incident to $2d$ edges.  
This sums to an edge count of $2\binom{d}{2} + 2d(\frac{d+1}{2})$.  Simplifying, we can verify that the edge count is $|E| = 2d^2.$
Substituting the vertex count $|V| = \frac{5d+1}{2}$, we see that Maxwell condition \ref{maxwell:total} is satisfied:
$$d|V| - \tbinom{d+1}{2} = d\left(\tfrac{5d+1}{2}\right) - \tbinom{d+1}{2} = |E|$$

Now we check Maxwell condition \ref{maxwell:sub}. If $V'$ is contained within a single banana 
bunch, the condition is satisfied as $B_{d,b}$ is minimally rigid and therefore Maxwell.
If $V'$ intersects both banana bunches non-trivially, then
there are three cases which depend on whether the intersection 
with each banana bunch 
contains at least $d$ vertices.

If $|V_i'\cup U'| \geq d$ for both $i$,  then combining  $|U'| \leq b = \frac{d+1}{2}$ with Lemma \ref{lem:genIneq} gives the result.

Now suppose, without loss of generality, that $|V_1' \cup U'| \geq d,$ but
	$|V_2' \cup U'| < d.$  We know that
	\begin{align}
	|E(V_2' \cup U')| &= \tbinom{|V_2'|}{2} + |U'||V_2'|\\
		&= \frac{(|V_2'|-1)|V_2'|}{2} +  |U'||V_2'|\\
		&\leq\frac{(d-2)|V_2'|}{2} +  b|V_2'|\label{eq:b}
		\end{align}
		Since $b=\frac{d+1}{2}$, we obtain $|E(V_2' \cup U')| \leq d|V_2'|$.
	Combining this with Inequality \ref{eq:case1} gives the desired inequality in the second case.
	
	Finally, suppose that both $|V_i' \cup U'| < d$ and $|V_1'| \geq |V_2'|.$  As $|V_1' \cup V_2' \cup U'| \geq d,$ there
	exists a subset $W \subseteq V_2'$ so that $|V_1' \cup W \cup U'|=d$. Let $W' = V_2' \backslash W.$  The set $|E(V_2')|$ 
	consists of the edges of $K_W,$ the edges of $K_{W'}$ and the edges of $K_{W,W'}.$

	Now suppose we had a set $V_1''$ satisfying $V_1 \supseteq V_1'' \supset V_1'$ and $|V_1'' \cup U'|=d.$  Then 
	\[|E(V_1' \cup W \cup U')|+|E(K_{V_1',W})| = | E(V_1''\cup U') |,\]  and by Inequality \ref{eq:case1},
	\begin{equation}
	\label{eq:case3}
	|E(V_1' \cup W \cup U')|+|E(K_{V_1',W})|\leq d|V_1' \cup W \cup U'| - \tbinom{d+1}{2}.
	\end{equation}
	Applying the argument in the second case to the set $W' \cup U'$ and adding the inequality to \ref{eq:case3}, gives
	 the result in this final case as $|E(K_{W,W'})| < |E(K_{V_1',W})|$.
	
\end{proof}

\subsection{Even-dimensional hyperbananas}
We observed earlier that hyperbananas may be either overconstrained or underconstrained 
in even-dimensional spaces and are not Maxwell graphs.  
However, by making a small modification to our definition, 
we 
obtain Maxwell graphs for even-dimensional spaces.

\begin{defn}
For  even $d$, we define the \emph{even hyperbanana} to be a graph $H^+_{d,b}$ consisting of a hyperbanana $H_{d,b}$ together with an additional $\frac{d}{2}$ edges connecting distinct vertices of the complete graphs in the two banana bunches.
\end{defn}

This addition of $\frac{d}{2}$ edges between the complete graphs in $H_{d,b}$ 
 results in
$H^+_{d,b}$ being a Maxwell graph for the even-dimensional spaces for certain values of $d$ relative to $b$.
One example of an even hyperbanana, $H^+_{4,2}$, is shown in Figure \ref{fig:kbplus42}.
Note that $H^+_{d,b}=(V,F)$, is built from $H_{d,b}=(V,E)$; let $E^+$ be the additional
$\frac{d}{2}$ edges so that $F = E \cup E^+$. In Figure \ref{fig:kbplus42}, for example,
$E^+$ is composed of the 2 dashed edges. 

\begin{figure}[tp]
\centering
\includegraphics[width=1.6in]{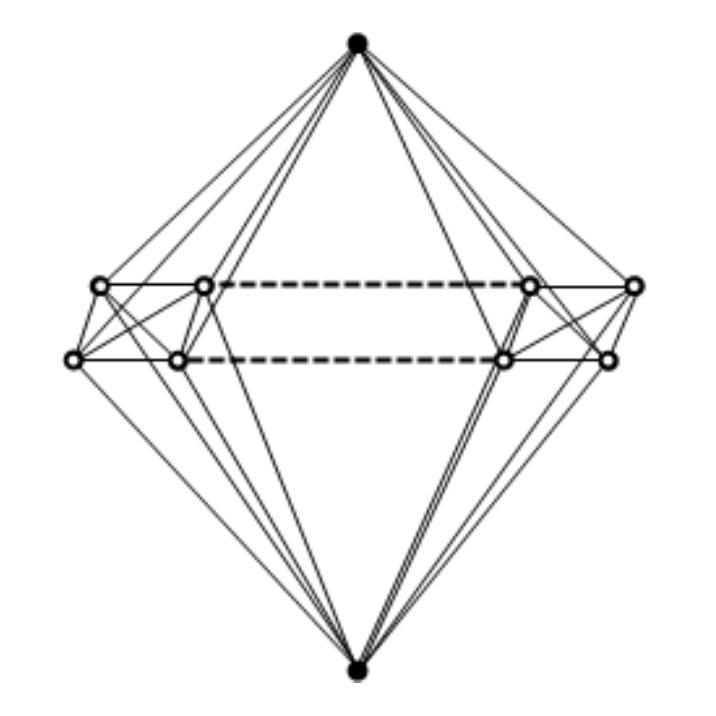}
\caption{The even hyperbanana $H^+_{4,2}$ is a flexible Maxwell graph; it is built from
the hyperbanana $H_{4,2}$ by an additional 2 (dashed) edges.}
\label{fig:kbplus42}
\end{figure}

\begin{thm}
The even hyperbanana $H^+_{d,b}=(V,F)$ embedded in $\mathbb{R}^{d}$ with $d = 2b$ is a Maxwell graph.
\end{thm}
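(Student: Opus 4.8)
The plan is to mirror the two-step proof of Theorem~\ref{thm:oddHyperbananas}, verifying the two parts of Definition~\ref{maxwell} in turn, but now carrying the $\frac{d}{2}$ extra edges $E^+$ through every estimate. First I would settle condition~\ref{maxwell:total} by a direct count. With $d = 2b$ we have $b = \frac{d}{2}$, so $H_{d,b}$ has $|V| = 2d + b = \frac{5d}{2}$ vertices and $|E| = 2\binom{d}{2} + 2db = 2d^2 - d$ edges; adjoining $E^+$ gives $|F| = 2d^2 - \frac{d}{2}$, and one checks this equals $d|V| - \binom{d+1}{2}$.

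For condition~\ref{maxwell:sub} I would fix $V' \subseteq V$ with $|V'| \ge d$, split $|F(V')| = |E(V')| + |E^+(V')|$, and use the defining property that the $\frac{d}{2}$ edges of $E^+$ join distinct vertices, so that they form a matching between $V_1$ and $V_2$. Consequently every induced extra edge has one endpoint in each $V_i'$ and no vertex is reused, giving $|E^+(V')| \le \min(|V_1'|,|V_2'|)$ and $|E^+(V')| \le \frac{d}{2}$. If $V'$ lies in a single banana bunch then $V_2' = \emptyset$ or $V_1' = \emptyset$, so $E^+(V') = \emptyset$ and the claim reduces to minimal rigidity of $B_{d,b}$ as before. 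Otherwise $V_1'$ and $V_2'$ are both nonempty, and I would run the same three cases as in the odd proof, each time showing that the slack in the estimate for $|E(V')|$ absorbs $|E^+(V')|$.

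The first two cases are short. When $|V_i' \cup U'| \ge d$ for both $i$, Lemma~\ref{lem:genIneq}, whose proof is parity-free, combines with $|U'| \le b = \frac{d}{2}$ and $|E^+(V')| \le \frac{d}{2}$ to give exactly $|F(V')| \le d|V'| - \binom{d+1}{2}$. When only $|V_1' \cup U'| \ge d$, substituting $b = \frac{d}{2}$ into the estimate of Inequality~\ref{eq:b} now yields the sharper $|E(V_2' \cup U')| \le (d-1)|V_2'|$; here the even parity is what matters, and combining with Inequality~\ref{eq:case1} on the first bunch leaves a surplus of $|V_2'|$, which $|E^+(V')| \le |V_2'|$ exactly consumes.

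The crux is the third case, $|V_i' \cup U'| < d$ for both $i$ with $|V_1'| \ge |V_2'|$. I would reuse the decomposition $V_2' = W \sqcup W'$ chosen so that $|V_1' \cup W \cup U'| = d$, together with the identity $|E(V')| = |E(V_1' \cup W \cup U')| + |E(W' \cup U')| + |W||W'|$. Inserting minimal rigidity on the $d$-vertex set $V_1'' \cup U'$ (which supplies $|E(V_1' \cup W \cup U')| \le \binom{d}{2} - |V_1'||W|$) and the exact count $|E(W' \cup U')| = \binom{|W'|}{2} + |U'||W'|$, the target inequality collapses, after using $|V'| = d + |W'|$ and $d - |W| - |U'| = |V_1'|$, to the single requirement $|E^+(V')| \le |V_1'||V_2'| - \binom{|W'|}{2}$. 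I expect this to be the delicate step: it must be verified from the structural bounds alone, and indeed $|V_1'||V_2'| - \binom{|W'|}{2} \ge |V_2'|\bigl(|V_1'| - \tfrac{|V_2'|-1}{2}\bigr) \ge |V_2'| \ge |E^+(V')|$, where the middle inequality uses $|V_1'| \ge |V_2'| \ge 1$ and the first uses $|W'| \le |V_2'|$. Note the degenerate subcase $|W'| = 0$ forces $|V'| = d$, where $|F(V')| \le \binom{d}{2} = d|V'| - \binom{d+1}{2}$ holds trivially, so no separate analysis is needed there.
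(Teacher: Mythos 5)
Your proposal is correct, and its skeleton --- the total count, splitting $|F(V')| = |E(V')| + |E^+(V')|$, Lemma \ref{lem:genIneq} with $|U'| \le \frac{d}{2}$ in the first case, the bound $|E(V_2' \cup U')| \le (d-1)|V_2'|$ in the second, and the $W \sqcup W'$ decomposition with the auxiliary set $V_1''$ in the third --- matches the paper's proof almost step for step. The one genuine divergence is the endgame of the third case. The paper bounds $|E(W' \cup U')| \le (d-1)|W'|$ (Inequality \ref{eq:d-1}), reduces the goal to $|W|\cdot|W'| + |E^+(V')| \le |V_1'|\cdot|W| + |W'|$ (Inequality \ref{eq:even}), and closes via the substitution $t = |V_1'| - |W'| \ge 1$, which in turn rests on the observation $|W| > 0$. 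You instead keep the exact count $|E(W' \cup U')| = \binom{|W'|}{2} + |U'||W'|$, collapse the target to the single inequality $|E^+(V')| \le |V_1'|\cdot|V_2'| - \binom{|W'|}{2}$, and verify it from $|W'| \le |V_2'| \le |V_1'|$; I checked the algebra (using $|V'| = d + |W'|$ and $d = |V_1'| + |U'| + |W|$) and it is right, and it avoids the paper's $t \ge 1$ detour, making the final step arguably more transparent. Two further points in your favor: you state explicitly that $E^+$ is a matching, which is exactly what justifies $|E^+(V')| \le \min(|V_1'|, |V_2'|)$ --- the paper invokes $|E^+(V')| \le |V_2'|$ without comment, and that bound would fail if two edges of $E^+$ could share an endpoint in $V_2$; and you observe that $E^+(V') = \emptyset$ when $V'$ lies in a single bunch, which the paper also glosses over. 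Your separate treatment of the degenerate subcase $|W'| = 0$ is harmless but unnecessary, since your chain of inequalities already holds there.
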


\begin{proof}
Since $d=2b$, the number of vertices in $H^+_{d,b}$ is $|V| = 2d + b = \frac{5}{2}d$, as there are two $K_d$ graphs and $b$ banana vertices.  
There are 2 complete graphs with $\binom{d}{2}$ edges, $b$ banana vertices connecting to the $2d$ complete graph vertices, and $\frac{d}{2}$ edges between the complete graphs, resulting in $|F| = 2d^2 - \frac{d}{2}$.
By substituting the vertex count, we can verify Maxwell condition \ref{maxwell:total}.  
$$d|V| - \tbinom{d+1}{2} = 2d^2 - \frac{d}{2} = |F|.$$

Now let $V' \subseteq V$ with $|V'| \geq d$.  If $V'$ is completely contained
in a banana bunch, Maxwell condition \ref{maxwell:sub} is satisfied as $B_{d,b}$ is 
Maxwell. Assume, then, that $V'$ non-trivially intersects
both vertex sets $V_1$ and $V_2$.

If $|V_i'\cup U'| \geq d$ for both $i = 1,2,$ then by Lemma \ref{lem:genIneq},
$$|E(V')| \leq d|V'| -2 \tbinom{d+1}{2} + d|U'|).$$
The number of banana vertices is $b = \frac{d}{2}$, so $|U'| \leq \frac{d}{2}$. Therefore,
\[
\begin{split}
|E(V')| &\leq d|V'| -2 \tbinom{d+1}{2}+ \frac{d^2}{2}\\
& =  d|V'| - \tbinom{d+1}{2} - \frac{d^2+d}{2}+ \frac{d^2}{2}\\
&=d|V'| - \tbinom{d+1}{2} - \frac{d}{2}.
\end{split}
\]

Since $F = E \cup E^+$, $|F(V')| = |E(V')| + |E^+(V')|$. By adding $|E^+(V')|$ to 
both sides of the previous inequality we obtain
$$|F(V')| \leq d|V'| - \tbinom{d+1}{2} - \frac{d}{2} + |E^+(V')|.$$
By definition, $|E^+| = \frac{d}{2}$, implying $|E^+(V')| \leq \frac{d}{2}$. 
Therefore, we can conclude that Maxwell condition \ref{maxwell:sub}, 
$$|F(V')| \leq d(|V'|) - \tbinom{d+1}{2},$$
holds in this case.

Now suppose, without loss of generality, that $|V_1' \cup U'| \geq d,$ but
	$|V_2' \cup U'| < d.$ 
	Since $b = \frac{d}{2}$, Inequality \ref{eq:b} implies
	\begin{equation}\label{eq:d-1}
		|E(V_2'\cup U')| \leq (d-1)|V_2'|. 
	\end{equation}
	We can combine this
	with 
	\[	|E(V_1'\cup U')| \leq d|V_1'\cup U' | - \tbinom{d+1}{2}\]	
and the edges in $E^+(V')$ to obtain
\[\begin{split}
|F(V')| &\leq d|V_1'\cup U' | - \tbinom{d+1}{2}+(d-1)|V_2'| +|E^+(V')|\\
&= d|V'| -  \tbinom{d+1}{2} - |V_2'| +|E^+(V')|\\
& \leq d|V'| -  \tbinom{d+1}{2} 
\end{split}
\]
as $|E^+(V')| \leq |V_2'|.$

Finally, suppose that both $|V_i' \cup U'| <d.$  Assume that $|V_1'| \geq |V_2'|$ and define $W$ and $W'$ as in the proof of Theorem
 \ref{thm:oddHyperbananas}.
Adding Inequalities \ref{eq:case3} and \ref{eq:d-1} (with $W'$ replacing $V_2'$), 
\begin{align*}
&|E(V_1' \cup W \cup U')|+|E(K_{V_1',W})| +|E(W' \cup U')| \\
& \leq d|V_1' \cup W \cup U'| - \tbinom{d+1}{2}+(d-1)|W'|\\
& = d|V'|- \tbinom{d+1}{2}-|W'|,
\end{align*}
and hence 
\begin{align*}
&|E(V_1' \cup W \cup U')|+|E(K_{V_1',W})| +|E(W' \cup U')|+|W'| \\
&\leq d|V'|- \tbinom{d+1}{2}.
\end{align*}

Since $|F(V')|$ is equal to
\[
|E(V_1' \cup W \cup U')|+|E(K_{W,W'})| +|E(W' \cup U')|+|E^+(V')|,
\]
it will suffice to show that
\[
|E(K_{W,W'})| +|E^+(V')| \leq|E(K_{V_1',W})|+|W'|, 
\]
or that
\begin{equation}
\label{eq:even}
|W| \cdot |W'| +|E^+(V')| \leq |V_1'|\cdot |W|+|W'|
\end{equation}

Now let $t = |V_1'| - |W'|$.  Since $|V_1'| \geq |V_2'|$ and $|V_1'| < d,$ $|W|>0$, which implies that $|V_1'| > |W'|$ and hence that $t\geq 1$.
Setting $|W'| = |V_1'| - t$, we have
\begin{align*}
&|W| \cdot |W'| +|E^+(V')|\\
& =|W|\cdot(|V_1'| - t) + |E^+(V')|\\
&=|V_1'|\cdot|W| - t|W| + |E^+(V')| \\
&\leq |V_1'|\cdot|W| -|W| + |E^+(V')|,
 \end{align*}
 as $t \geq 1.$
Then 
\[
|V_1'|\cdot|W| -|W| + |E^+(V')| \leq |V_1'|\cdot |W|+|W'|
\]
if and only if
\[
|V_1'|\cdot|W| + |E^+(V')| \leq |V_1'|\cdot |W|+|W'|+|W|.
\]
Indeed, since $|W'|+|W| = |V_2'| \geq  |E^+(V')|$, this inequality
holds, completing the proof.
\end{proof}

\section{Flexible hyperbananas}
In this section, we prove that the Maxwell hyperbananas are flexible.

We begin by considering the rigidity matrix $M_{B_{d,b}}$ for a generic framework on the banana bunch $B_{d,b}$ in dimension $d$, 
which has $d(d+b)$ columns and $\binom{d}{2} + db$ rows.  Since the banana bunch is minimally rigid, the rank of its rigidity matrix is maximal
and equal to the number of rows $\binom{d}{2} + db$.  
Let the vertex set of $B_{d,b}$ be partitioned into sets $V_1$ and $U$, where the set $U$ consists of banana vertices.
Assume that the columns of $M_{B_{d,b}}$ are arranged so that the columns corresponding to the vertices in $V_1$ come first, followed by the columns for $U$.%
\begin{lem}
Each row of the block matrix 
\[
\bigg[\begin{array}{c |c}   
     0 & M_{K_U} 
 \end{array}
 \bigg]\]
with $d^2$ columns of zeros ($d$ columns for each vertex in the $V_1$), is in the row space of $M_{B_{d,b}}.$
\label{dependence}
\end{lem}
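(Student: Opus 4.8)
The plan is to deduce the lemma directly from the rigidity of the banana bunch rather than to exhibit an explicit linear combination of rows. Recall from the paragraph preceding the lemma that $M_{B_{d,b}}$ is the rigidity matrix of a \emph{generic} framework on $B_{d,b}$, and that $B_{d,b}$ is generically minimally rigid in $\RR^d$; hence this framework is rigid. By the definition of rigidity adopted in the excerpt, inserting any new bar between two vertices leaves the rank of the rigidity matrix unchanged. For a pair of vertices $a,b$, inserting the bar $ab$ adjoins to $M_{B_{d,b}}$ the single row $R(a,b)$ prescribed by Definition~\ref{defn:rm} --- the row with $\bp_a-\bp_b$ in the block for $a$, $\bp_b-\bp_a$ in the block for $b$, and zeros in every other block. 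Since adjoining a row to a matrix preserves its rank precisely when that row already lies in the span of the existing rows, rigidity is equivalent to the assertion that $R(a,b)$ belongs to the row space of $M_{B_{d,b}}$ for \emph{every} pair $a,b$.

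First I would record a structural fact about $B_{d,b}$: each of the $b$ Henneberg $0$-extensions adjoins a new vertex together with exactly $d$ edges to the vertices of the original $K_{V_1}$, so that $U$ is an independent set and every pair $u,u'\in U$ is a genuine non-edge. Next I would identify the rows of the displayed block matrix with the implied-edge rows $R(u,u')$. The key point is that the row pattern of Definition~\ref{defn:rm} puts nonzero entries only in the blocks of the two endpoints of an edge; since both endpoints $u,u'$ lie in $U$, every block belonging to a vertex of $V_1$ is automatically zero. Thus, read across all $d(d+b)$ columns, $R(u,u')$ is exactly the row of $M_{K_U}$ for the edge $uu'$ padded by the $d^2$ zero columns corresponding to the vertices of $V_1$ --- here I would note that $M_{K_U}$ is formed from the embedding of $U$ inherited from the generic embedding of $B_{d,b}$, which is itself generic. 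Combining the two observations, each row of $\bigl[\,0 \mid M_{K_U}\,\bigr]$ is some $R(u,u')$, and each such row lies in the row space of $M_{B_{d,b}}$ by the rigidity reformulation above.

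The only point requiring care --- and the reason I expect the rigidity argument to be preferable to a hands-on computation --- is the vanishing of the $V_1$ block. A direct attempt to synthesize $R(u,u')$ from the rows of $M_{B_{d,b}}$ would combine the edges joining $u$ and $u'$ to $V_1$ with the $K_{V_1}$ edges, and one would then have to verify that all contributions to the $V_1$ columns cancel, solving a linear system for the coefficients. Rigidity sidesteps this entirely: it guarantees that the full vector $R(u,u')$, which by its very row pattern already has a zero $V_1$ block, lies in the row space, so the cancellation on $V_1$ is automatic and no explicit coefficients need be produced. This clean separation is exactly what makes the lemma useful downstream, since it locates the implied constraints among the banana vertices in the $U$-columns alone.
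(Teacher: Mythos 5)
Your proof is correct and takes essentially the same approach as the paper: both deduce the lemma from the rigidity of the banana bunch, using the fact that inserting a bar between two banana vertices cannot change the rank of $M_{B_{d,b}}$, so the corresponding row (which by the row pattern of Definition~\ref{defn:rm} is exactly a row of $[\,0 \mid M_{K_U}\,]$) must lie in its row space. The only cosmetic difference is that the paper phrases the rank-preservation step via the nullity staying equal to $\binom{d+1}{2}$, while you invoke the definition of rigidity directly and spell out the block-structure identification in more detail.
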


\begin{proof} 
Since the banana bunch is minimally rigid and spans $\RR^d$, $M_{B_{d,b}}$ has nullity $\binom{d+1}{2}$.  
If we add an edge from $K_U$, the new rigidity matrix will still have nullity $\binom{d+1}{2}.$    
Thus, each such row must be a linear combination of the rows of $M_{B_{d,b}}$.
\end{proof}

\begin{proposition}\label{prop:reduce}
If $B_{d,b} = (V_1 \cup U, E)$ is embedded in $\RR^d,$ and the rank of $M_{K_U}$ is $\binom{b}{2},$ 
then $M_{B_{d,b}}$ is row-equivalent to a matrix of the form 
\[
  \begin{blockarray}{cc}
      \begin{block}{[c|c]}
     \BAmulticolumn{2}{c}{M_{B_{d,b}}^*} \\
    \cline{1-2}
     0 & M_{K_U} \\
    \end{block}
  \end{blockarray}
,\]
where $M_{B_{d,b}}^*$ 
consists of $|E|-\binom{b}{2}$ rows of the original matrix $M_{B_{d,b}}$.
\end{proposition}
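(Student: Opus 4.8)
The plan is to prove row-equivalence by exhibiting a matrix of the prescribed block form whose row space coincides with that of $M_{B_{d,b}}$; since two matrices with the same number of rows are row-equivalent precisely when they share a row space (equivalently, a reduced row echelon form), this suffices. All of the geometric content has already been supplied by Lemma \ref{dependence}, so what remains is a linear-algebra argument built on the matroid exchange property.

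First I would record the relevant dimensions. Because $B_{d,b}$ is minimally rigid in $\RR^d$, the $|E|$ rows of $M_{B_{d,b}}$ are linearly independent, so they form a basis of the row space $R$ and $\dim R = |E|$. Lemma \ref{dependence} places each of the $\binom{b}{2}$ rows of the block matrix $[\,0 \mid M_{K_U}\,]$ inside $R$, and the hypothesis $\rank M_{K_U} = \binom{b}{2}$ guarantees that these $\binom{b}{2}$ rows are themselves linearly independent: a linear dependence among them would descend to one among the rows of $M_{K_U}$.

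Now I would apply the Steinitz exchange lemma. We have an independent set of $\binom{b}{2}$ vectors (the rows of $[\,0 \mid M_{K_U}\,]$) sitting inside the $|E|$-dimensional space $R$, together with a basis of $R$ given by the rows of $M_{B_{d,b}}$. The exchange property lets me extend the independent set to a basis of $R$ by adjoining $|E| - \binom{b}{2}$ of the original rows; I collect these chosen original rows into the submatrix $M_{B_{d,b}}^*$. Stacking $M_{B_{d,b}}^*$ on top of $[\,0 \mid M_{K_U}\,]$ produces an $|E| \times d(d+b)$ matrix whose rows form a basis of $R$, hence whose row space is exactly $R$. This matrix therefore has the required block form and is row-equivalent to $M_{B_{d,b}}$.

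I do not anticipate a genuine obstacle here: the real work was done in Lemma \ref{dependence}, which relied on the minimal rigidity and spanning properties of the banana bunch. The only point demanding care is the role of the hypothesis $\rank M_{K_U} = \binom{b}{2}$—it is exactly what promotes the rows of $[\,0 \mid M_{K_U}\,]$ from a merely spanning family of implied dependencies to a linearly independent set that can serve as part of a basis, which in turn fixes the count $|E| - \binom{b}{2}$ of retained original rows.
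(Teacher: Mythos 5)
Your proof is correct and is essentially the same argument as the paper's: the paper's proof carries out the Steinitz exchange step by step, swapping rows of $[\,0 \mid M_{K_U}\,]$ into $M_{B_{d,b}}$ via elementary row operations and using the independence of the rows of $M_{K_U}$ to guarantee an original row is available to replace at each stage, which is exactly the exchange lemma you invoke as a black box (together with Lemma \ref{dependence}).
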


\begin{proof}
Let $R$ be a row in $[\begin{array}{c|c}   
    0 & M_{K_U} 
 \end{array}].$  By Lemma \ref{dependence}, $R$ may be written as a linear combination of rows of $M_{B_{d,b}}.$  Any row of $M_{B_{d,b}}$ appearing in such a linear combination with a nonzero coefficient may be replaced by $R$ through a sequence of elementary row operations.  Any subsequent row $R'$ of $[\begin{array}{c|c}   
    0 & M_{K_U}
 \end{array}]$ will remain dependent on the rows of the modified matrix.  Moreover, when we express $R'$ as a linear combination of the current set of rows, some  
 remaining row of the original matrix $M_{B_{d,b}}$ must appear with a nonzero coefficient as the rows of $M_{K_U}$ are independent.  Thus, we can insert each row of $[\begin{array}{c|c}   
    0 & M_{K_U}
 \end{array}]$ in this way.

\end{proof}

With this we can prove the following theorem.

\begin{thm}
If $G$ is the hyperbanana $H_{d,b} \subset \mathbb{R}^d$ where $d = 2b-1$ or 
$H_{d,b}^+ \subset \RR^d$ where $d=2b$ and $b \geq 2$,
 then $G$ is flexible. 
\end{thm}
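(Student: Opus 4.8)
The goal is to exhibit a nontrivial infinitesimal motion of $G$, i.e.\ a nonzero vector in the null space of $M_G$ that is not a rigid motion of $\RR^d$. The plan is to use Proposition \ref{prop:reduce} to put the rigidity matrix of each banana bunch into the normalized block form, so that the behaviour of $G$ is governed entirely by the ``reduced'' blocks $M^*_{B_{d,b}}$ together with the identified copies of $M_{K_U}$. First I would check the hypothesis of the proposition: since $U$ has $b$ vertices embedded generically in $\RR^d$ with $b \le d+1$, the $b$ points are affinely independent, so the complete graph $K_U$ is independent and $\rank M_{K_U} = \binom{b}{2}$, as required.

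\medskip\noindent
The key structural observation is that when we glue the two bunches $G_1,G_2$ along $U$, the two copies of the block $M_{K_U}$ occupy exactly the \emph{same} columns (those indexed by the vertices of $U$). After applying Proposition \ref{prop:reduce} to each bunch separately, the full matrix $M_G$ is row-equivalent to a matrix consisting of $M^*_{B_{d,b}}$ for $G_1$, $M^*_{B_{d,b}}$ for $G_2$, and \emph{two} identical copies of $[\,0\mid M_{K_U}\,]$. One of these duplicated $\binom{b}{2}$-row blocks is therefore redundant and may be deleted without changing the row space. Counting rows before and after deletion shows the rank of $M_G$ drops by exactly $\binom{b}{2}$ relative to what independence of all rows would give; equivalently the nullity exceeds $\binom{d+1}{2}$, which is precisely the signature of a nontrivial flex. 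For the even case $H^+_{d,b}$ the same reduction applies to the $H_{d,b}$ part, and I would argue that the extra $\frac{d}{2}$ edges of $E^+$ cannot restore the lost rank, since they involve only $V_1$–$V_2$ vertices and are too few ($\frac d2 < \binom b2$ when $b\ge 2$, using $d=2b$) to span the $\binom{b}{2}$-dimensional space of deleted rows.

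\medskip\noindent
To make the dimension count into a genuine flexibility statement rather than just a rank deficiency, I would confirm that the excess nullity is not absorbed by a degenerate embedding. Because $d = 2b-1 \ge d-1$ (and $d=2b$ in the even case) the banana vertices and the two $K_d$'s are generic and span all of $\RR^d$, so the only trivial motions are the $\binom{d+1}{2}$ rigid motions; any nullity beyond this corresponds to an honest internal flex. Geometrically this flex is the rotation of one bunch relative to the other about the affine span of the $b$ identified points of $U$, which is the higher-dimensional analogue of the implied hinge of the double banana; the redundant $M_{K_U}$ block is the algebraic manifestation of the implied edges among the $U$ vertices.

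\medskip\noindent
The main obstacle I anticipate is the bookkeeping in the even case: one must verify carefully that adding back the $E^+$ edges to the reduced matrix increases the rank by at most $|E^+| = \frac d2$, and that this increase genuinely falls short of the $\binom b2$ deficiency for all $b \ge 2$. This requires knowing that the $E^+$ rows, which are supported on columns of $V_1\cup V_2$, interact in a controlled way with the already-reduced blocks rather than coincidentally filling in the deleted hinge directions. Establishing that the deleted directions lie outside the span of $M^*_{B_{d,b}}$ together with the $E^+$ rows is the crux; the odd case is comparatively clean because there are no extra edges and the single redundant $M_{K_U}$ block immediately yields the excess nullity.
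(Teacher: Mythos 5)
Your odd-dimensional argument is correct and is essentially the paper's own proof: apply Proposition~\ref{prop:reduce} to each bunch, observe that the block $[\,0 \mid M_{K_U} \mid 0\,]$ then appears twice in the same columns, discard the redundant copy, and conclude $\rank M_{H_{d,b}} \leq |E| - \binom{b}{2}$, which together with the Maxwell count $|E| = d|V| - \binom{d+1}{2}$ gives nullity at least $\binom{d+1}{2} + \binom{b}{2}$. Your verification that $\rank M_{K_U} = \binom{b}{2}$ (a hypothesis of Proposition~\ref{prop:reduce}) via affine independence of $b \leq d+1$ generic points is a detail the paper leaves implicit, and it is correct. One small caveat: the reduction shows the rank drops by \emph{at least} $\binom{b}{2}$, not ``exactly''; exactness is a separate result in the paper (via Proposition~\ref{banana}) and is not needed for flexibility.

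The even case, however, contains a genuine gap. Your claimed inequality $\frac{d}{2} < \binom{b}{2}$ for all $b \geq 2$ (with $d = 2b$) is false precisely in the smallest cases: for $b=2$ one has $\frac{d}{2} = 2 > 1 = \binom{b}{2}$, and for $b=3$ one has equality, $3 = 3$. So as written your argument fails for $H^+_{4,2}$ (the paper's Figure~\ref{fig:kbplus42} example) and for $H^+_{6,3}$, and only succeeds when $b \geq 4$. The comparison of $|E^+| = \frac{d}{2}$ against $\binom{b}{2}$ is the wrong bookkeeping: what you are missing is that $H_{d,b}$ with $d = 2b$ is \emph{underconstrained} by exactly $\frac{d}{2}$ edges, i.e., $|E| = d|V| - \binom{d+1}{2} - \frac{d}{2}$. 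Hence the reduction gives nullity of $M_{H_{d,b}}$ at least $\binom{d+1}{2} + \frac{d}{2} + \binom{b}{2}$ --- the excess nullity is $\frac{d}{2} + \binom{b}{2}$, not merely $\binom{b}{2}$. Appending the $\frac{d}{2}$ rows of $E^+$ can lower the nullity by at most $\frac{d}{2}$, leaving at least $\binom{d+1}{2} + \binom{b}{2} > \binom{d+1}{2}$ for every $b \geq 2$. Equivalently, in the paper's phrasing: $\rank M_{H^+_{d,b}} \leq \rank M_{H_{d,b}} + \tfrac{d}{2} \leq |E| - \tbinom{b}{2} + \tfrac{d}{2} = |F| - \tbinom{b}{2}$, and since $H^+_{d,b}$ satisfies the Maxwell count $|F| = d|V| - \binom{d+1}{2}$, flexibility follows. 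In particular, your worry about whether the $E^+$ rows might ``coincidentally fill in the deleted hinge directions'' is a red herring: the count goes through no matter how those rows interact with the reduced blocks, so no such spanning analysis is needed.
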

\begin{proof}
Consider the hyperbanana $H_{d,b}$ partitioned into two bunches $B_{d,b}(1)$ and $B_{d,b}(2)$.  Let $M_{B_{d,b}}(1)$ be the rigidity matrix for $B_{d,b}(1)$, $M_{B_{d,b}}(2)$ be the rigidity matrix for $B_{d,b}(2)$ and $M$ be the rigidity matrix for $H_{d,b}$.  If we put the vertices in an order with $(V_1, U, V_2)$ and order the columns of $M$ accordingly, then $M$ is a block matrix of the form 

\[ 
  \begin{blockarray}{cccc}
    & V_1 & U & V_2 \\
    \begin{block}{c[ccc]}
	    B_{d,b} (1) &  \BAmulticolumn{2}{c|}{M_{B_{d,b}}(1)} &0\\
    \cline{1-4}
    B_{d,b} (2) &0 & \BAmulticolumn{2}{|c}{M_{B_{d,b}}(2)} \\
    \end{block}
  \end{blockarray}
\]

By Proposition \ref{prop:reduce} $M$ is row equivalent to

\[ 
  \begin{blockarray}{cccc}
    & V_1& U & V_2 \\
    \begin{block}{c[c|c|c]}
	    \BAmulticolumn{1}{c}{\multirow{2}{*}{$B_{d,b} (1)$}}& \BAmulticolumn{2}{c|}{M_{B_{d,b}}(1)^*} & \BAmulticolumn{1}{c}{\multirow{2}{*}{$0$}}\\
	 \cline{2-3}
	     & 0& M_{K_U}& \\
    \cline{1-4}
      \BAmulticolumn{1}{c}{\multirow{2}{*}{$B_{d,b} (2)$}} & \BAmulticolumn{1}{c}{\multirow{2}{*}{$0$}} &\BAmulticolumn{2}{|c}{M_{B_{d,b}}(2)^*}  \\
    \cline{3-4}
    & & M_{K_U}& 0 \\
    \end{block}
  \end{blockarray}
\]

We can see that there are at least $\binom{b}{2}$ dependencies in $M$, since the $[0|M_{K_U}|0]$ is seen twice in the matrix.
Therefore, since the number of columns is $d|V|$ and the number of
rows is $|E|$, the nullity of $M$ is at least $\binom{d+1}{2}+\binom{b}{2}.$
Thus, since a framework with at least $d$ vertices is minimally rigid in $\RR^d$
if and only if it has nullity $\binom{d+1}{2}$, $H_{d,b}$ is flexible.
Moreover, since $M$ is a submatrix of the rigidity matrix of $H^+_{d,b}$, which satisfies
the Maxwell counts, we see that
$H^+_{d,b}$ is also flexible.
\end{proof}

For odd-dimensional bananas, we can show this bound is tight using the following proposition.
\begin{proposition}\label{banana}
Any linear combination of rows of $M_{B_{d,b}}^*$ of the form 
\[
  \begin{blockarray}{cc}
     V_1 & U \\
    \begin{block}{[c|c]} 
     0 & *  \\
    \end{block}
  \end{blockarray},
\]
 must be trivial, where the $*$ represents potentially nonzero entries.
\end{proposition}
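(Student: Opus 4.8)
The plan is to reduce the statement to a single dimension count: that the subspace of the row space of $M_{B_{d,b}}$ consisting of vectors supported only on the $U$-columns (zero on every $V_1$-column) has dimension exactly $\binom{b}{2}$. Granting this, the proposition follows quickly. By Lemma \ref{dependence} the $\binom{b}{2}$ rows of $[\,0\mid M_{K_U}\,]$ lie in this subspace, and since $\rank M_{K_U}=\binom{b}{2}$ they are independent, so they span it. By Proposition \ref{prop:reduce} the rows of $M_{B_{d,b}}^*$ together with the rows of $[\,0\mid M_{K_U}\,]$ form $|E|$ vectors that span the (rank $|E|$) row space, hence are independent. Now if some combination $\sum_i c_i r_i$ of rows $r_i$ of $M_{B_{d,b}}^*$ has the form $[\,0\mid *\,]$, it is a $U$-supported row-space vector, hence a combination $\sum_j d_j s_j$ of rows $s_j$ of $[\,0\mid M_{K_U}\,]$; then $\sum_i c_i r_i-\sum_j d_j s_j=0$ forces every $c_i=0$ by independence, so the combination is trivial (and in particular the $*$ block vanishes).

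To establish the dimension count I would pass to the transpose picture. Since $M_{B_{d,b}}$ has independent rows, the map sending an edge-coefficient vector $\omega$ to the row combination $\omega^{\mathsf T}M_{B_{d,b}}$ is injective, so $U$-supported row-space vectors correspond bijectively to those $\omega$ for which $\omega^{\mathsf T}M_{B_{d,b}}$ vanishes on every $V_1$-column. Writing $M^{V_1}$ for the submatrix of $M_{B_{d,b}}$ formed by the $V_1$-columns, this condition is exactly $\omega^{\mathsf T}M^{V_1}=0$, so the dimension we seek equals $|E|-\rank M^{V_1}$.

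The heart of the argument is thus computing $\rank M^{V_1}$, equivalently the dimension of the kernel of $M^{V_1}$ viewed as a map on the $V_1$-velocities. A velocity field $y$ on $V_1$ lies in this kernel precisely when the field that equals $y$ on $V_1$ and $0$ on $U$ is an infinitesimal motion of $B_{d,b}$; that is, the kernel consists of the flexes of $B_{d,b}$ that pin every banana vertex. Because $B_{d,b}$ is minimally rigid, all of its flexes are trivial (infinitesimal isometries of $\RR^d$), so this kernel is exactly the space of infinitesimal isometries vanishing on the $b$ points of $U$. The main obstacle, and the one computation I would do carefully, is to evaluate this dimension: an infinitesimal isometry $x\mapsto Ax+t$ with $A$ skew that kills the $b$ generic, hence affinely independent, points of $U$ must annihilate the $(b-1)$-dimensional direction space of the flat they span, leaving a skew map on the orthogonal $(d-b+1)$-dimensional complement, so the kernel has dimension $\binom{d-b+1}{2}$.

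Combining these, $\rank M^{V_1}=d|V_1|-\binom{d-b+1}{2}=d^2-\binom{d-b+1}{2}$, and therefore the target dimension is $|E|-\rank M^{V_1}=\binom{d}{2}+db-d^2+\binom{d-b+1}{2}$. It then remains only to simplify: a direct expansion shows that every term involving $d$ cancels and the expression collapses to $\binom{b}{2}$, which is precisely the equality that makes the bound tight. One can check this already in the double banana $d=3,b=2$, where the $U$-supported subspace is one-dimensional and spanned by the single implied edge. This completes the reduction and hence the proof.
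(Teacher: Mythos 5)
Your proof is correct, and it takes a genuinely different route from the paper's. The paper argues by contradiction via a duality with trivial motions: given a nontrivial combination $R=[\,0\mid *\,]$ of rows of $M_{B_{d,b}}^*$, it projects $R$ to $\barR$ on the $U$-columns; if $\barR$ were dependent on the rows of $M_{K_U}$, the reduced matrix of Proposition \ref{prop:reduce} would have a row dependency, contradicting the full row rank of $M_{B_{d,b}}$; otherwise there is a vector $\bp'$ in the null space of $M_{K_U}$ with $\barR\cdot\bp'\neq 0$, and since $K_U$ is complete, $\bp'$ is the restriction of an infinitesimal rigid motion of $\RR^d$, which extends to a trivial motion $\bq'$ of all of $B_{d,b}$ satisfying $R\cdot\bq'=\barR\cdot\bp'\neq 0$ --- impossible for a vector $R$ in the row space of a rigidity matrix. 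You instead compute the dimension of the $U$-supported slice of the row space outright: it equals $|E|-\rank M^{V_1}$ for the $V_1$-column submatrix $M^{V_1}$, whose right kernel you identify (using that all flexes of the minimally rigid $B_{d,b}$ are trivial) with the infinitesimal isometries pinning the $b$ affinely independent points of $U$, of dimension $\binom{d-b+1}{2}$; the binomial arithmetic then collapses to exactly $\binom{b}{2}$, so the rows of $[\,0\mid M_{K_U}\,]$ span that slice, and the independence of the $|E|$ rows of the reduced matrix forces every such combination to be trivial. Both arguments rest on the same geometric fact --- the motions of the $U$-points permitted by $K_U$ are precisely restrictions of isometries of $\RR^d$ --- but yours trades the paper's extension-of-motions contradiction for an explicit rank count. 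What your version buys is a sharper, quantitative conclusion: the $U$-supported subspace of the row space has dimension exactly $\binom{b}{2}$ and coincides with the row space of $[\,0\mid M_{K_U}\,]$, which is very much in the spirit of the exact-nullity theorem the paper proves next; the cost is the extra genericity bookkeeping (affine independence of $U$, $\rank M_{K_U}=\binom{b}{2}$, injectivity of restriction of isometries) and the arithmetic, all of which you handle correctly but which the paper's shorter contradiction avoids entirely.
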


\begin{proof}
Suppose for contradiction that there is a linear combination of rows of $M_{B_{d,b}}^*$ equal to $R$ where 
$R$ has nonzero entries only in columns corresponding to $U$.  Let $\barR$ be the projection of $R$ 
to the columns corresponding to $U.$

If $\barR$ is dependent on the rows in $M_{K_U}$, then the rank of $M_{B_{d,b}}$ is not maximal, which is a contradiction.  
So, we must assume that $\barR$ is independent of these rows.  Thus, the nullspace of $M_{K_U}$ augmented by the row $\barR$
is smaller than the nullspace of $M_{K_U}$.  But all of the elements of the nullspace of $M_{K_U}$ are obtained from rigid motions
of $\RR^d$.  So there is a nonzero vector $\bp' \in \RR^{db}$ in the null space of $M_{K_U}$ which assigns velocities 
to vertices in $U$ 
and has the property that
$\barR \cdot \bp' \neq 0.$  

Since $K_U$ is rigid, $\bp'$ must be obtained by restricting a rigid motion of $\RR^d$ to $K_U.$  Applying
this rigid motion to all of $B_{d,b}$ gives a vector $\bq'$ that assigns velocities to all vertices in $B_{d,b}$ and is equal to $\bp'$ for the vertices in $U.$
As $R$ has nonzero entries only in columns corresponding to $U,$ $\bq' \cdot R = \bp' \cdot R \neq 0.$  $R$
is in the row space of $M_{B_{d,b}}$, so this implies that the nullspace of $M_{B_{d,b}}$ is missing one of the rigid motions of $\RR^d$. 
This is a contradiction because $M_{B_{d,b}}$ is a rigidity matrix.
\end{proof}

\begin{thm}
The hyperbanana $H_{d,b} \subset \mathbb{R}^d$ where $d = 2b-1$ has rigidity matrix $M_{H_{d,b}}$ with nullity exactly $\binom{d+1}{2}+\binom{b}{2}.$  
\end{thm}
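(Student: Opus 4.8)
The plan is to combine the lower bound on the nullity already established in the preceding theorem with a matching upper bound. Since that theorem gives nullity at least $\binom{d+1}{2} + \binom{b}{2}$, it suffices to prove the reverse inequality, or equivalently that $\rank M_{H_{d,b}} \geq 2\binom{d}{2} + 2db - \binom{b}{2}$; in fact I will compute the rank exactly and read off the nullity.

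I would start from the row-reduced block form of $M_{H_{d,b}}$ produced in the proof of the flexibility theorem, in which the two copies of $[\,0 \mid M_{K_U} \mid 0\,]$ appear explicitly. Because these two blocks are identical, the row space of $M$ is spanned by the bunch-$1$ rows --- namely $M_{B_{d,b}}(1)^*$ together with a single copy of $[\,0 \mid M_{K_U} \mid 0\,]$ --- and the bunch-$2$ rows $M_{B_{d,b}}(2)^*$. Let $P$ denote the span of the bunch-$1$ rows and $Q$ the span of $M_{B_{d,b}}(2)^*$, so that $\rank M = \dim P + \dim Q - \dim(P \cap Q)$. By Proposition \ref{prop:reduce} the bunch-$1$ rows are row-equivalent to $M_{B_{d,b}}(1)$, which is minimally rigid, so $\dim P = |E_1| = \binom{d}{2} + db$; and since the rows of $M_{B_{d,b}}(2)^*$ are exactly the independent rows left after extracting $M_{K_U}$, $\dim Q = |E_2| - \binom{b}{2} = \binom{d}{2} + db - \binom{b}{2}$.

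The crux is to show that $P \cap Q = \{0\}$, i.e. that the only dependencies are the $\binom{b}{2}$ arising from the repeated $M_{K_U}$ block. Any $w \in P \cap Q$ is supported only on the $U$-columns: it is zero on $V_1$ because every row spanning $Q$ is supported on $U \cup V_2$, and it is zero on $V_2$ because every row spanning $P$ is supported on $V_1 \cup U$. Hence $w$ is a linear combination of rows of $M_{B_{d,b}}(2)^*$ of the form $[\,0 \mid *\,]$, zero on $V_2$, so Proposition \ref{banana}, applied to the second banana bunch, forces $w = 0$. This is the step I expect to be the main obstacle, and it is precisely Proposition \ref{banana} --- the statement that a banana bunch admits no nontrivial row dependency supported off its $K_d$ --- that prevents any further collapse of the rank.

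With the intersection trivial, $\rank M_{H_{d,b}} = \dim P + \dim Q = \big(\tbinom{d}{2}+db\big) + \big(\tbinom{d}{2}+db-\tbinom{b}{2}\big)$. Subtracting this from the column count $d|V| = d(2d+b) = 2d^2 + db$ yields a nullity of $d^2 - db + d + \binom{b}{2}$; substituting $d = 2b-1$, so that $db = \binom{d+1}{2}$, simplifies the first three terms to $\binom{d+1}{2}$ and gives nullity exactly $\binom{d+1}{2} + \binom{b}{2}$, matching the lower bound. Everything outside the intersection step is routine bookkeeping with the edge and vertex counts, so the heart of the argument is the direct-sum decomposition $P \oplus Q$ certified by Proposition \ref{banana}.
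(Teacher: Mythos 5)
Your proposal is correct and follows essentially the same route as the paper: both start from the row-reduced block form of $M$, discard the duplicate $[\,0 \mid M_{K_U} \mid 0\,]$ block, and use Proposition \ref{banana} to show the remaining rows admit no further dependencies, which pins the nullity at $\binom{d+1}{2}+\binom{b}{2}$. The only (immaterial) differences are cosmetic: you phrase the key step as $P \cap Q = \{0\}$ plus a dimension count and apply Proposition \ref{banana} to the second banana bunch, whereas the paper phrases it as linear independence of the rows of the reduced matrix and applies the proposition to the first bunch; if anything, your formulation in terms of an arbitrary element of $P \cap Q$ is slightly more careful than the paper's row-by-row dependence argument.
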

\begin{proof}
We will  show that  
\[ M'=
  \begin{blockarray}{cccc}
    & V_1 & U& V_2 \\
    \begin{block}{c[c|c|c]}
	    \BAmulticolumn{1}{c}{\multirow{2}{*}{$B_{d,b} (1)$}}& \BAmulticolumn{2}{c|}{M_{B_{d,b}}(1)^*} & \BAmulticolumn{1}{c}{\multirow{2}{*}{$0$}}\\
	 \cline{2-3}
	     & 0& M_{K_U}& \\
    \cline{1-4}
	\BAmulticolumn{1}{c}{\multirow{1}{*}{$B_{d,b} (2)$}} & \BAmulticolumn{1}{c|}{\multirow{1}{*}{$0$}}& \BAmulticolumn{2}{c}{M_{B_{d,b}}(2)^*}\\
    \end{block}
  \end{blockarray}
\]
has full rank and hence nullity $\binom{d+1}{2}+\binom{b}{2}$.

Since $M_{B_{d,b}}(1)$ has full rank, we know that the top block of $M'$
has linearly independent rows.  
Similarly, the rows in $[0|M_{B_{d,b}}(2)^*]$ are also an independent set. 

Now suppose there is a row $R \in [0|M_{B_{d,b}}(2)^*]$ that is dependent on the upper block of $M'$; 
then $R$ is a linear combination of the rows of $[M_{B_{d,b}}(1)^*|0]$ and
$[0|M_{K_d}|0]$. 
There must be at least one row of $[M_{B_{d,b}}(1)^*|0]$ with a nonzero coefficient or 
we would contradict the independence of $ [0|M_{B_{d,b}}(2)]$.  
Since $R$ is zero in the columns corresponding to vertices in $V_1,$ this 
implies that there is a linear combination of rows of  $[M_{B_{d,b}}(1)^*|0]$ that is 
nonzero only in the banana vertex columns, which contradicts Proposition \ref{banana}.

\end{proof}

\section{Conclusions and Future Work}
We presented a family of hyperbanana graphs and showed that they are Maxwell graphs
under certain conditions. We further
proved that they are flexible, providing counterexamples to the sufficiency 
of the Maxwell counts for bar-and-joint rigidity in dimensions 3 and higher. 

For hyperbananas embedded in odd-dimensional spaces, we
gave a precise analysis of the space of infinitesimal motions.
However, it remains an open problem to give an exact analysis for the even hyperbananas, 
as the addition of the $\frac{d}{2}$ edges prevents us from extending our proof.
Based on Mathematica calculations on randomized embeddings of even hyperbananas, 
we conjecture the following:
\begin{conj}
The even hyperbanana $H^+_{d,b} \in \mathbb{R}^d$ where $d = 2b$ and $b \geq 2$ 
has a rigidity matrix with nullity exactly $\binom{d+1}{2} + \binom{b}{2}$.
\end{conj}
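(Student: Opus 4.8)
The plan is to replace the block row-reduction used in the odd case by a direct computation of the null space of the rigidity matrix, in which the effect of the extra edges $E^+$ collapses to a single finite-dimensional independence question.

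First I would describe $\mathrm{null}(M_{H_{d,b}})$ exactly, ignoring $E^+$. Since each banana bunch $B_{d,b}$ is minimally rigid and (for $b\geq 1$) spans $\RR^d$, every infinitesimal motion of the hyperbanana restricts on $V_i\cup U$ to a global infinitesimal isometry $g_i$ of $\RR^d$, and the gluing forces $g_1|_U=g_2|_U$; conversely every such pair is a motion, so $\mathrm{null}(M_{H_{d,b}})=\{(g_1,g_2): g_1|_U=g_2|_U\}$. Writing $g_1=g$ and $g_2=g-h$, this space is parameterized by a free isometry $g$ (contributing $\binom{d+1}{2}$) together with $h:=g_1-g_2$ ranging over the space $\mathcal H$ of infinitesimal isometries vanishing on $U$. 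For a generic embedding the $b$ points of $U$ span a $(b-1)$-flat $A$, and $\mathcal H$ is exactly the infinitesimal rotations fixing $A$ pointwise, i.e. the skew maps supported on the orthogonal complement $N$ of the direction space of $A$, so $\dim\mathcal H=\binom{d-b+1}{2}=\binom{b+1}{2}$ using $d=2b$. Hence $\mathrm{null}(M_{H_{d,b}})$ has dimension $\binom{d+1}{2}+\binom{b+1}{2}$ (this already re-proves, and in the even case strictly improves, the lower bound coming from the two copies of $M_{K_U}$ in Proposition \ref{prop:reduce}).

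Next I would add the $b=\tfrac d2$ edges of $E^+$. Each edge $e=x_ey_e$ with $x_e\in V_1$, $y_e\in V_2$ imposes one linear condition $L_e=(\bp'_{x_e}-\bp'_{y_e})\cdot(\bp_{x_e}-\bp_{y_e})=0$ on the motions. Writing $g(\bp)=S_g\bp+t_g$, the $g$-contribution to $\bp'_{x_e}-\bp'_{y_e}$ is $S_g(\bp_{x_e}-\bp_{y_e})$, which is annihilated by $(\bp_{x_e}-\bp_{y_e})$ since $S_g$ is skew. Thus $L_e(g,h)=h(\bp_{y_e})\cdot(\bp_{x_e}-\bp_{y_e})$ depends on $h$ alone, and $\mathrm{nullity}(M_{H^+_{d,b}})=\binom{d+1}{2}+\binom{b+1}{2}-\mathrm{rank}\{L_e\}_{e\in E^+}$. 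As there are only $b$ functionals, this is at least $\binom{d+1}{2}+\binom b2$, so the conjecture is equivalent to the statement that the $b$ functionals $L_e$ are linearly independent on $\mathcal H$.

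Finally I would settle this independence by exterior algebra and a genericity argument, which is the crux of the proof. Writing $h\in\mathcal H$ through a skew map $S_h$ on $N$ and setting $n_{y_e}=\pi_N(\bp_{y_e}-a_0)$ and $w_e=\pi_N(\bp_{x_e}-\bp_{y_e})$ for a base point $a_0\in A$, one has $L_e(h)=(S_h n_{y_e})\cdot w_e$, so $L_e$ pairs against the bivector $\Omega_e=w_e\wedge n_{y_e}\in\wedge^2 N$. Since $\dim\wedge^2N=\binom{b+1}{2}\geq b$ for $b\geq 2$, independence of the $L_e$ is equivalent to independence of the $b$ bivectors $\Omega_e$. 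To prove this for a generic framework I would fix a generic $U$ (so that $N,\pi_N,a_0$ are determined) and treat the endpoints $\bp_{x_e},\bp_{y_e}$ as variables; then each $\Omega_e$ is polynomial in these coordinates and a maximal $b\times b$ minor of $[\Omega_1\,\cdots\,\Omega_b]$ is not identically zero, because at the special values $n_{y_e}=f_0$, $w_e=f_e$ (for an orthonormal basis $f_0,\dots,f_b$ of $N$) the bivectors $f_e\wedge f_0$ are manifestly independent. Hence the minor is a nonzero polynomial, independence holds on a dense open set of embeddings, and intersecting with the finitely many open conditions guaranteeing minimal rigidity of the two bananas yields the result for generic frameworks. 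The main obstacle is exactly this last independence claim: it is where the clean block structure exploited in Theorem \ref{thm:oddHyperbananas} and Proposition \ref{banana} breaks down, and the only delicate points are that $N$ and $\pi_N$ depend algebraically on $U$—handled by freezing $U$ first—and that the edges of $E^+$ can be taken with distinct endpoints realizing the witnessing configuration, which is possible since $|V_i|=d=2b\geq b$.
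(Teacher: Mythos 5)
The statement you were asked to prove is not proved in the paper at all: it is the paper's closing \emph{conjecture}, which the authors explicitly leave open (they state that the extra $\frac{d}{2}$ edges prevent their odd-dimensional argument from extending) and support only by Mathematica computations on randomized embeddings. So there is no proof of record to compare yours against; what you have written is a proposed resolution of an open problem, and I have checked it on its own terms.

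Your outline is, as far as I can verify, correct, and it deliberately avoids the paper's machinery: instead of row-reducing against the two copies of $[\,0\mid M_{K_U}\,]$ (Lemma \ref{dependence}, Proposition \ref{prop:reduce}) and controlling dependencies via Proposition \ref{banana}, you compute the motion space of $H_{d,b}$ exactly. Since each bunch is generically infinitesimally rigid and spans, a motion is a pair of infinitesimal isometries $(g_1,g_2)$ agreeing on $U$, and $h=g_1-g_2$ vanishes on the $(b-1)$-flat $A$ spanned by $U$, i.e.\ is a skew map on the $(b+1)$-dimensional normal space $N$; this gives exact nullity $\binom{d+1}{2}+\binom{b+1}{2}$ for the unaugmented even hyperbanana, and run with $d=2b-1$ the same computation reproves the paper's exact-nullity theorem following Proposition \ref{banana} in one stroke, which is a genuine bonus. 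The reduction of each $E^+$ edge to a functional on $\mathcal{H}$ is right (the $g$-contribution dies by skew-symmetry), the pairing $\langle S,u\wedge v\rangle=(Su)\cdot v$ between skew maps on $N$ and $\wedge^2 N$ is nondegenerate, so independence of the $L_e$ is equivalent to independence of the bivectors $\Omega_e$, and your witness $n_{y_e}=f_0$, $w_e=f_e$ does make the relevant $b\times b$ minor $\pm 1$. The arithmetic checks in the smallest case: for $H^+_{4,2}$ you get $10+3-2=11=\binom{5}{2}+\binom{2}{2}$, as conjectured. Two points need to be said out loud in a full write-up. (i) Your witness is a degenerate embedding (all $y_e$ have the same projection to $N$, and nothing forces the bunches to be rigid there); as you note, it only certifies that the minor is a nonzero polynomial, so you must intersect the independence locus with the open locus where both bunches are infinitesimally rigid and $U$ is affinely independent, and nonemptiness of that intersection over your frozen $U$ deserves an explicit sentence (for any fixed affinely independent $U$, generic positions of $V_1\cup V_2$ keep both bunches rigid, since a $0$-extension on a generic $K_d$ remains rigid for any fixed position of the new vertex). (ii) The genericity bookkeeping is cleanest phrased as: on that open set, $\mathrm{nullity}=\binom{d+1}{2}+\binom{b+1}{2}-\operatorname{rank}\{L_e\}\geq\binom{d+1}{2}+\binom{b}{2}$, while the witness shows the maximal rank of $M_{H^+_{d,b}}$ is at least $|F|-\binom{b}{2}$; since generic frameworks achieve maximal rank, the two bounds meet. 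With those additions supplied, your sketch appears to settle the conjecture, so it should be written up and verified with the care due a claimed solution of an open problem rather than a homework exercise.
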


Since counterexamples provide an increased understanding of barriers to finding combinatorial
characterizations of higher-dimensional bar-and-joint rigidity,
it would also be interesting to further generalize the hyperbananas by parametrizing the number
of banana bunches instead of always gluing two.

\scriptsize
\bibliographystyle{plain}

\end{document}